\theoremstyle{plain}
\newtheorem{theorem}{Theorem}[section]
\newtheorem{prop}[theorem]{Proposition}
\theoremstyle{definition}
\newtheorem{definition}[theorem]{Definition}
\newtheorem{example}[theorem]{Example}
\numberwithin{equation}{section}
\newcommand{\Deltaop}{{\bf \Delta}^{op}}
\newcommand{\nerve}{\text{nerve}}
\newcommand{\we}{\text{we}}
\newcommand{\Map}{\text{Map}}
\newcommand{\Aut}{\text{Aut}}
\newcommand{\Ho}{\text{Ho}}
\newcommand{\SSets}{\mathcal{SS}ets}
\newcommand{\Sets}{\mathcal Sets}
\newcommand{\map}{\text{map}}
\newcommand{\ob}{\text{ob}}
\newcommand{\hoequiv}{\text{hoequiv}}
\newcommand{\css}{\mathcal{CSS}}
\begin{document}

\title[Homotopy fiber products]{Homotopy fiber products of homotopy theories}

\author[J.E. Bergner]{Julia E. Bergner}

\address{Department of Mathematics, University of California, Riverside, CA 92521}

\email{bergnerj@member.ams.org}

\date{\today}

\subjclass[2000]{Primary: 55U40; Secondary: 55U35, 18G55, 18G30,
18D20}

\keywords{model categories, simplicial categories, complete Segal
spaces, homotopy theories, homotopy fiber products}

\thanks{The author was partially supported by NSF grant DMS-0805951.}

\begin{abstract}
Given an appropriate diagram of left Quillen functors between model categories, one can define a notion of homotopy fiber product, but one might ask if it is really the correct one.  Here, we show that this homotopy pullback is well-behaved with respect to translating it into the setting of more general homotopy theories, given by complete Segal spaces, where we have well-defined homotopy pullbacks.
\end{abstract}

\maketitle

\section{Introduction}

Homotopy theory originates with the study of topological spaces up
to weak homotopy equivalence, where two spaces are weakly homotopy
equivalent if there is a map between them inducing isomorphisms on
all the homotopy groups.  The question of whether the nice properties of topological spaces held in
other settings led to the development of the notion of a model
category by Quillen \cite{quillen}.   The study of model
categories is then a more abstract form of homotopy theory, one
which has been investigated extensively.

One could then consider model categories themselves as objects of a category and consider Quillen equivalences as weak equivalences between them.  In this framework, one could ask questions about relationships between model categories; for example, what would a homotopy limit or homotopy colimit of a diagram of model categories be?  Unfortunately, there are no immediate answers to these questions because at present there is no known model structure on the category of model categories.  In one particular case of a homotopy limit, we have a plausible construction, that of the homotopy fiber product.  This construction was first explained to the author by Smith; it appears in the literature in a paper of To\"en \cite{toendha}, where it is useful in his development of derived Hall algebras.  One might ask whether, without a model category of model categories, we can really accept this definition as a genuine homotopy pullback construction.

However, one can also consider a homotopy theory to be something
more general than a model category, such as a category with a specified class of weak
equivalences, or maps one would like to consider as equivalences
but which are not necessarily isomorphisms.  While there may or
may not be a model structure on such a category, one can
heuristically think of formally inverting the weak equivalences, set-theoretic problems notwithstanding.  Furthermore, such ``homotopy theories" can be regarded as the objects of a model category.

In a series of papers \cite{dkcalc}, \cite{dkfncxes},
\cite{dksimploc}, Dwyer and Kan develop the theory of simplicial
localizations, which are simplicial categories corresponding to
model categories, or, more generally, categories with specified
weak equivalences. Since every such category has a simplicial
localization, and since, up to a natural notion of equivalence of
simplicial categories, every simplicial category arises as the
simplicial localization of some category with weak equivalences
\cite[2.5]{dkdiag}, simplicial categories can be considered to be
models for homotopy theories.  With these weak equivalences, often called Dwyer-Kan
equivalences, the category of (small) simplicial categories can
itself be considered as a homotopy theory, and in fact it has a model structure \cite{simpcat}.  Thus, within this framework one can address questions
about homotopy theories that are natural to ask in a given model
category, such as characterizations of homotopy limits and colimits.

Unfortunately, the model structure on the category of simplicial
categories is technically difficult to work with.  The weak
equivalences are particularly challenging to identify, for
example.  Fortunately, we have a choice of several other
equivalent model categories in which to address these questions.
Work of the author and of Joyal and Tierney has proved that the
complete Segal space model structure on the category of simplicial
spaces, two different Segal category model structures on the
category of Segal precategories, and the quasi-category model
structure on the category of simplicial sets are all equivalent as
model categories, and thus each is a model for the homotopy theory of
homotopy theories \cite{thesis}, \cite{joyalsc}, \cite{jt}.

In this paper, we address the construction of the homotopy fiber product of model categories and its analogue within the complete Segal space model structure.  Of the various models mentioned
above, the complete Segal space model structure is the best
setting in which to answer this question due to the particularly
nice description of the relevant weak equivalences.

Some clarification of terminology is needed here, as there are actually two notions of what is meant by a homotopy fiber product of model categories.  One is more restrictive than the other; the stronger is the one we would expect to correspond to a homotopy pullback, but the weaker is the one which can be given the structure of a model category.  We show that at least in some cases this model structure can be localized so that the fibrant-cofibrant objects satisfy the more restrictive condition.

The main result of this paper is that the stricter definition does in fact correspond to a homotopy pullback when we work in the complete Segal space setting.  We also give a characterization of the complete Segal spaces arising from the less restrictive description.

We should point out that this construction has been used in special cases, for example by H\"uttemann, Klein, Vogell, Waldhausen, and Williams in \cite{hkvww}, and as an example of more general constructions, for example the twisted diagrams of H\"uttemann and R\"ondigs \cite{hr}, and the model categories of comma categories as given by Stanculescu \cite{stan}.

It is also important to note that translating this question into the setting of more general homotopy theories is not merely a temporary solution until one finds a model category of model categories.  In practice, model structures are often hard to establish, and furthermore, the condition of having a Quillen pair between two such model structures is a rigid one.  Being able to consider homotopy theories as more flexible kinds of objects, and having morphisms between them less structured, makes it more likely that we can actually implement such a construction.  We consider such a case in Example \ref{fiber}.  Yet, with the relationship between the two settings established, we can use the additional structure when we do indeed have it.

In fact, part of our motivation for making the comparison in this paper is to generalize To\"en's development of derived Hall algebras.  Where he defines an associative algebra corresponding to stable model categories given by modules over a dg category, we would like to define such an algebra using a more general stable homotopy theory, namely, one given by a stable complete Segal space.  The main result of this paper allows us to use a homotopy pullback of complete Segal spaces in the setting where To\"en uses a homotopy fiber product of model categories.

The dual notion of homotopy pushouts of model categories, as well as more general homotopy limits and homotopy colimits, will be considered in later work.


\section{Model categories and more general homotopy theories}

In this section we give a brief review of model categories and their relationship with the complete Segal space model for more general homotopy theories.

Recall that a \emph{model category} $\mathcal M$ is a category with three distinguished classes of morphisms: weak equivalences, fibrations, and cofibrations, satisfying five axioms \cite[3.3]{ds}.  Given a model category structure, one can pass to the homotopy category $\Ho(\mathcal M)$, in which the weak equivalences from $\mathcal M$ become isomorphisms.  In particular, the weak equivalences, as the morphisms that we wish to invert, make up the most important part of a model category.  An object $x$ in $\mathcal M$ is \emph{fibrant} if the unique map $x \rightarrow \ast$ to the terminal object is a fibration.  Dually, an object $x$ in $\mathcal M$ is \emph{cofibrant} if the unique map $\phi \rightarrow x$ from the initial object is a cofibration.


Given a model category $\mathcal M$, there is also a model structure on the category $\mathcal M^{[1]}$, often called the morphism category of $\mathcal M$.  The objects of $\mathcal M^{[1]}$ are morphisms of $\mathcal M$, and the morphisms of $\mathcal M^{[1]}$ are given by pairs of morphisms making the appropriate square diagram commute.  A morphism in $\mathcal M^{[1]}$ is a weak equivalence (or cofibration) if its component maps are weak equivalences (or cofibrations) in $\mathcal M$.  More generally, $\mathcal M^{[n]}$ is the category with objects strings of $n$ composable morphisms in $\mathcal M$; the model structure can be defined analogously.

One could, more generally, consider categories with weak equivalences and no additional structure, and then formally invert the weak equivalences.  This process does give a homotopy category, but it frequently has the disadvantage of having a proper class of morphisms between any two given objects.  If we are willing to accept such set-theoretic problems, then we can work in this situation; the advantage of a model structure is that it provides enough additional structure so that we can take homotopy classes of maps and hence avoid these difficulties.  In this paper, we will use both notions of a ``homotopy theory," depending on the circumstances.

We would, however, like to work with nice objects modeling categories with weak equivalences.  While there are several options, the model that we use in this paper is that of complete Segal spaces.

Recall that the simplicial category $\Deltaop$ is defined to be the category with objects finite ordered sets $[n]=\{0 \rightarrow 1 \rightarrow \cdots \rightarrow n\}$ and morphisms the opposites of the order-preserving maps between them.  A \emph{simplicial set} is then a functor
\[ K \colon \Deltaop \rightarrow \Sets. \]
We denote by $\SSets$ the category of simplicial sets, and this category has a natural model category structure equivalent to the standard model structure on topological spaces \cite[I.10]{gj}.

One can just as easily consider more general simplicial objects; in this paper we consider \emph{simplicial spaces} (also called bisimplicial sets), or functors
\[ X \colon \Deltaop \rightarrow \SSets. \]
There are several model category structures on the category of bisimplicial sets.  An important one is the Reedy model structure \cite{reedy}, which is equivalent to the injective model structure, where the weak equivalences are given by levelwise weak equivalences of simplicial sets, and the cofibrations are given likewise \cite[15.8.7]{hirsch}.  Given a simplicial set $K$, we also denote by $K$ the simplicial space which has the simplicial set $K$ at every level.  We denote by $K^t$, or ``$K$-transposed", the constant simplicial space in the other direction, where $(K^t)_n = K_n$, where on the right-hand side $K_n$ is regarded as a discrete simplicial set.

We use the idea, originating with Dwyer and Kan, that a simplicial category, or category enriched over simplicial sets, models a homotopy theory, in the following way.  Using either of their two notions of simplicial localization, one can obtain from a category with weak equivalences a simplicial category \cite{dkfncxes}, \cite{dksimploc}, and there is a model structure $\mathcal{SC}$ on the category of all small simplicial categories \cite{simpcat}; thus, we have a so-called homotopy theory of homotopy theories.  One useful consequence of taking the simplicial category corresponding to a model category is that we can use it to describe \emph{homotopy function complexes}, or homotopy-invariant mapping spaces $\Map^h(x,y)$ between objects of a model category which is not necessarily equipped with the additional structure of a simplicial model category.  We use, in particular, the simplicial set $\Aut^h(x)$ of homotopy invertible self-maps of an object $x$.

Here we also consider simplicial spaces satisfying conditions imposing a notion of composition up to homotopy.  These Segal spaces and complete Segal spaces were first introduced by Rezk \cite{rezk}, and the name is meant to be suggestive of similar ideas first presented by Segal \cite{segal}.

\begin{definition} \cite[4.1]{rezk}
A \emph{Segal space} is a Reedy fibrant simplicial space $W$ such that the Segal maps
\[ \varphi_n \colon W_n \rightarrow \underbrace{W_1 \times_{W_0} \cdots \times_{W_0} W_1}_n \] are weak equivalences of simplicial sets for all $n \geq 2$.
\end{definition}

Given a Segal space $W$, we can consider its ``objects" $\ob(W)= W_{0,0}$, and, between any two objects $x$ and $y$, the ``mapping space" $\map_W(x,y)$, given by the homotopy fiber of the map $W_1 \rightarrow W_0 \times W_0$ given by the two face maps $W_1 \rightarrow W_0$.  The Segal condition given here tells us that a Segal space has a notion of $n$-fold composition of mapping spaces, at least up to homotopy.  More precise constructions are given by Rezk \cite[\S 5]{rezk}.  Using this composition, we can define ``homotopy equivalences" in a natural way, and then speak of the subspace of $W_1$ whose components contain homotopy equivalences, denoted $W_{\hoequiv}$.  Notice that the degeneracy map $s_0 \colon W_0 \rightarrow W_1$ factors through $W_{\hoequiv}$; hence we may make the following definition.

\begin{definition} \cite[\S 6]{rezk}
A \emph{complete Segal space} is a Segal space $W$ such that the map $W_0 \rightarrow W_{\hoequiv}$ is a weak equivalence of simplicial sets.
\end{definition}

Given this definition, we can describe the model structure on the category of simplicial spaces that is used throughout this paper.

\begin{theorem} \cite[\S 7]{rezk}
There is a model category structure $\css$ on the category of simplicial spaces, obtained as a localization of the Reedy model structure such that:
\begin{enumerate}
\item the fibrant objects are the complete Segal spaces,

\item all objects are cofibrant, and

\item the weak equivalences between complete Segal spaces are levelwise weak equivalences of simplicial sets.
\end{enumerate}
\end{theorem}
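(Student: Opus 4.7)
The plan is to construct $\css$ as a left Bousfield localization of the Reedy model structure on simplicial spaces at a carefully chosen set of maps. As a first step, I would verify that the Reedy model structure satisfies the hypotheses of Hirschhorn's localization theorem: that it is left proper and cellular (or, equivalently, combinatorial). Since left Bousfield localization does not alter the cofibrations, all objects remain cofibrant after localization, giving condition (2) automatically.

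Next, I would identify the localizing set. To enforce the Segal condition, I would localize at the inclusions $G(n) \hookrightarrow F(n)$ for $n \geq 2$, where $F(n)$ is the representable simplicial space on $[n]$ and $G(n) \subset F(n)$ is the ``spine'' obtained by gluing $n$ copies of $F(1)$ along the appropriate vertices. For a Reedy fibrant $W$ the derived mapping space from $F(n)$ is $W_n$, and from $G(n)$ is the iterated homotopy fiber product appearing in the Segal condition, so the fibrant objects of this intermediate localization are exactly the Segal spaces. To enforce completeness, I would then further localize at a single map $E \to F(0)$, where $E$ is a cofibrant model for the nerve of the free-standing isomorphism regarded as a constant simplicial space in the appropriate direction. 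Being local with respect to this map is equivalent to the degeneracy $W_0 \to W_{\hoequiv}$ being a weak equivalence, giving condition (1).

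The most delicate step is verifying condition (3): that weak equivalences between fibrant objects of $\css$ are levelwise weak equivalences. This is a consequence of the general principle from Bousfield localization theory that a map between local-fibrant objects is a local equivalence if and only if it is a weak equivalence in the underlying model structure. Since Reedy equivalences between Reedy fibrant objects are detected levelwise, the claim follows. The principal obstacle is the completeness step: one must show that the chosen map $E \to F(0)$ really detects the completeness condition. This requires the identification of $W_{\hoequiv}$ with a suitable derived mapping space into $W$, via Rezk's ``classification diagram'' construction, which is the core technical input one would import from \cite{rezk}.
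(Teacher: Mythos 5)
The paper does not prove this theorem; it is quoted directly from Rezk \cite{rezk}, and your outline faithfully reproduces the argument given there: a left Bousfield localization of the (left proper, cellular, all-objects-cofibrant) Reedy structure at the spine inclusions $G(n) \hookrightarrow F(n)$ for $n \geq 2$ and then at the map $E \to F(0)$ involving the discrete nerve of the free-standing isomorphism, with the identification $\Map(E,W) \simeq W_{\hoequiv}$ as the key technical input and the standard fact that local equivalences between local fibrant objects are underlying (hence levelwise) weak equivalences giving condition (3). This is essentially the same approach as the cited source, so there is nothing to add.
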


Now we return to the idea that a complete Segal space models a homotopy theory.

\begin{theorem} \cite{thesis}
The model categories $\mathcal{SC}$ and $\css$ are Quillen equivalent.
\end{theorem}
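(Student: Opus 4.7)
The plan is to factor the claimed Quillen equivalence as a composite passing through the Segal category model structure on simplicial spaces with discrete zeroth row, which I denote $\sesp$. First I would introduce a nerve-type functor $N \colon \mathcal{SC} \to \sesp$ defined by
\[ N(\mathcal{C})_n = \coprod_{x_0,\ldots,x_n \in \ob(\mathcal{C})} \map_{\mathcal{C}}(x_0,x_1) \times \cdots \times \map_{\mathcal{C}}(x_{n-1},x_n), \]
so that $N(\mathcal{C})_0 = \ob(\mathcal{C})$ is discrete and, when $\mathcal{C}$ is locally Kan, $N(\mathcal{C})$ satisfies the Segal condition on the nose. The left adjoint $F \colon \sesp \to \mathcal{SC}$ would be constructed by freely generating a simplicial category from the $1$-skeleton of a Segal precategory and imposing the relations forced by the higher simplices.

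Next I would check that $(F,N)$ is a Quillen pair, using the characterization of fibrations in $\mathcal{SC}$ (levelwise Kan fibrations on mapping spaces plus a lifting property detectable on homotopy categories) together with the pointwise formula defining $N$. The main technical hurdle is then upgrading this to a Quillen equivalence. Concretely, for a fibrant simplicial category $\mathcal{C}$ one must show that the derived counit $FN(\mathcal{C}) \to \mathcal{C}$ is a Dwyer-Kan equivalence: bijection on objects is immediate from the construction, but the weak equivalences on mapping spaces require using the Segal condition on $N(\mathcal{C})$ to recover the higher composition data of $\mathcal{C}$ up to homotopy from its $1$- and $2$-simplex information. Dually, for cofibrant $X \in \sesp$ the unit $X \to NF(X)$ must be shown to be a Segal category equivalence, reducing to a comparison of mapping spaces in $F(X)$ with those built combinatorially from $X$ via the Segal condition.

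Finally, I would compose with an already-established Quillen equivalence between $\sesp$ and $\css$, such as the comparison due to Joyal and Tierney which identifies Segal categories with complete Segal spaces via Reedy fibrant replacement followed by Rezk's completion. Composing the two Quillen equivalences yields the claim, with the derived counit condition for $(F,N)$ being the principal source of difficulty; control of this map is what forces one to work with the intermediate Segal category step rather than attempting to define a direct adjunction between $\mathcal{SC}$ and $\css$.
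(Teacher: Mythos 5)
The paper does not prove this theorem but simply cites it from \cite{thesis}, and your outline is essentially the argument given there: a nerve/free adjunction between simplicial categories and Segal precategories, upgraded to a Quillen equivalence by comparing mapping spaces via the Segal condition, and then composed with the known equivalence between Segal categories and complete Segal spaces. The one structural detail you elide is that \cite{thesis} needs \emph{two} model structures on Segal precategories --- the nerve is right Quillen into a projective-type structure $\Secat_f$, while the comparison with $\css$ uses the Reedy-type structure $\Secat_c$, so the chain also includes the identity functor as a Quillen equivalence $\Secat_f \rightleftarrows \Secat_c$ --- but otherwise your route coincides with the cited proof.
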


Furthermore, due to work of Rezk \cite{rezk} which was continued by the author \cite{css}, we can actually characterize, up to weak equivalence, the complete Segal space arising from a simplicial category, or more specifically, from a model category.  Rezk defines a functor which we denote $L_C$ from the category of model categories to the category of simplicial spaces; given a model category $\mathcal M$, we have that
\[ L_C(\mathcal M)_n = \nerve(\we(\mathcal M^{[n]})). \]  Here, $\mathcal M^{[n]}$ is defined as above, and $\we(\mathcal M^{[n]})$ denotes the subcategory of $\mathcal M^{[n]}$ whose morphisms are the weak equivalences.  While the resulting simplicial space is not in general Reedy fibrant, and hence not a complete Segal space, Rezk proves that taking a Reedy fibrant replacement is sufficient to obtain a complete Segal space \cite[8.3]{rezk}.  Hence, for the rest of this paper we assume that the functor $L_C$ includes composition with this Reedy fibrant replacement and therefore gives a complete Segal space.

One other difficulty that arises in this definition is the fact that it is only a well-defined functor on the category whose objects are model categories and whose morphisms preserve weak equivalences.  This restriction on the morphisms is stronger than we would like; it would be preferable to have such a functor defined on the category of model categories with morphisms Quillen functors, where weak equivalences are only preserved between either fibrant or cofibrant objects.  To obtain this more suitable functor, we consider $\mathcal M^c$, the full subcategory of $\mathcal M$ whose objects are cofibrant.  While $\mathcal M^c$ may no longer have the structure of a model category, it is still a category with weak equivalences, and therefore Rezk's definition can still be applied to it, so our new definition is
\[ L_C(\mathcal M)_n = \nerve(\we((\mathcal M^c)^{[n]})). \]  Each space in this new diagram is weakly equivalent to the one given by the previous definition, and now the construction is functorial on the category of model categories with morphisms the left Quillen functors.  

If one wanted to consider right Quillen functors instead, we could take the full subcategory of fibrant objects, $\mathcal M^f$, rather than $\mathcal M^c$.

Before stating the theorem giving the characterization, we give some facts about simplicial monoids, or functors
from $\Deltaop$ to the category of monoids.  Given a simplicial
monoid $M$
%
(or, more commonly, a simplicial group), we can find a classifying complex
of $M$, a simplicial set whose geometric realization is the
classifying space $BM$. A precise construction can be made for
this classifying space by the $\overline W M$ construction
\cite[V.4.4]{gj}, \cite{may}.  As we are not so concerned
here with the precise construction as with the fact that such a
classifying space exists, we will simply write
$BM$ for the classifying complex of $M$.

\begin{theorem} \cite[7.3]{css}  \label{baut}
Let $\mathcal M$ be a model category.  For $x$ an object of $\mathcal M$ denote by $\langle x \rangle$ the weak equivalence class of $x$ in $\mathcal M$, and denote by $Aut^h(x)$ the simplicial monoid of self weak equivalences of $x$.  Up to weak equivalence in the model category $\css$, the complete Segal space $L_C(\mathcal M)$ looks like
\[ \coprod_{\langle x \rangle} BAut^h(x) \Leftarrow \coprod_{\langle \alpha \colon x \rightarrow y \rangle} BAut^h(\alpha) \Lleftarrow \cdots. \]
\end{theorem}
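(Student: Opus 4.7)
The plan is to analyze $L_C(\mathcal M)_n = \nerve(\we((\mathcal M^c)^{[n]}))$ level by level: decompose each level into connected components, identify each component with a classifying space, and verify that the simplicial structure assembles correctly. Since every morphism in the category $\we((\mathcal M^c)^{[n]})$ is a weak equivalence, two $n$-strings of composable cofibrant morphisms lie in the same connected component precisely when they represent the same weak equivalence class $\langle \alpha \rangle$. Consequently the category splits as a disjoint union of full subcategories indexed by these classes, and passing to nerves yields
\[ L_C(\mathcal M)_n \cong \coprod_{\langle \alpha \rangle} \nerve(\we((\mathcal M^c)^{[n]})_{\langle \alpha \rangle}), \]
which already produces the coproduct indexing on the right-hand side of the theorem.

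Next, for a fixed class $\langle \alpha \rangle$, I would identify the nerve of the corresponding component with $B\Aut^h(\alpha)$. The idea is to pass through the Dwyer-Kan hammock localization of this connected category with weak equivalences. Being connected and having every morphism a weak equivalence, the localization is equivalent to a one-object simplicial groupoid whose vertex simplicial monoid is, by construction, $\Aut^h(\alpha)$. A standard Dwyer-Kan comparison then identifies the nerve of the subcategory of weak equivalences with the classifying complex of its simplicial localization, which is $B\Aut^h(\alpha)$.

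Finally, one verifies that the simplicial face and degeneracy maps of $L_C(\mathcal M)$ respect these decompositions: a face map sends an $n$-string $\alpha$ to the appropriate substring $\alpha'$, induces a map of simplicial monoids $\Aut^h(\alpha) \to \Aut^h(\alpha')$, and hence the expected map $B\Aut^h(\alpha) \to B\Aut^h(\alpha')$ between classifying spaces. The main obstacle is the second step: rigorously relating the ordinary categorical nerve $\nerve(\we \mathcal C)$ to the classifying space $B\Aut^h$ of a simplicial monoid defined via homotopy function complexes. This requires Dwyer-Kan's theory of simplicial localizations together with the observation that, for a connected category of weak equivalences, the localization is, up to homotopy, a one-object simplicial groupoid with vertex simplicial monoid $\Aut^h(x)$. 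Once this identification is in place, the remainder of the proof is essentially functoriality and bookkeeping.
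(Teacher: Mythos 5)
Your proposal is essentially correct, but it takes a more direct route than the paper, which does not prove this statement at all: it cites \cite[7.3]{css}, and the sketch of that proof given in Section 4 of the present paper proceeds through the Dwyer--Kan simplicial localization $L\mathcal M$, its subcategory $\mathcal C$ of homotopy-invertible morphisms, a free resolution $F(\mathcal C)\to\mathcal C$, and the groupoid completion $F(\mathcal C)^{-1}F(\mathcal C)$, which is then collapsed to a disjoint union of one-object simplicial groups $\mathcal G$, producing a functorial zig-zag $\mathcal G \leftarrow F(\mathcal C)^{-1}F(\mathcal C)\leftarrow F(\mathcal C)\rightarrow \mathcal C$ (together with the results of \S 6 of \cite{css} to pass from simplicial categories to model categories). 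You instead stay entirely with Rezk's classification diagram $\nerve(\we((\mathcal M^c)^{[n]}))$, split it into connected components, and invoke the Dwyer--Kan classification theorem ($\nerve(\we\,\mathcal C)\simeq\coprod_{\langle x\rangle}B\Aut^h(x)$) one level at a time. This buys a shorter and more self-contained statement of what must be proved, and it correctly isolates the genuinely hard step; but note two caveats. First, the identification of the vertex simplicial monoid of the localized component with $\Aut^h(\alpha)$ as defined via homotopy function complexes in $\mathcal M^{[n]}$ is not ``by construction'' --- it is itself a theorem of Dwyer--Kan, whose proof is exactly the free-resolution-and-groupoid-completion argument your approach was meant to bypass, so the two routes ultimately rest on the same machinery. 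Second, your final ``functoriality and bookkeeping'' step needs the component-by-component equivalences to be realized by a zig-zag of actual maps natural in $[n]$, so that the levelwise equivalences assemble into a zig-zag of maps of simplicial spaces; this is precisely why the paper's sketch insists on the functorial zig-zag rather than on abstract homotopy equivalences, and you should make that naturality explicit.
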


(We should point out that the reference (Theorem 7.3 of \cite{css}) gives a characterization of the complete Segal space arising from a simplicial category, not from a model category.  However, the results of \S 6 of that same paper allow one to translate it to the theorem as stated here.)


This characterization, together with the fact that weak equivalences between complete Segal spaces are levelwise weak equivalences of simplicial sets, enables us to compare complete Segal spaces arising from different model categories.

\section{Homotopy fiber products of model categories}

We begin with the definition of homotopy fiber product as given by To\"en in \cite{toendha}.
First, suppose that
\[ \xymatrix@1{\mathcal M_1 \ar[r]^{F_1} & \mathcal M_3 & \mathcal
M_2 \ar[l]_{F_2}} \] is a diagram of left Quillen functors of
model categories.  Define their \emph{homotopy fiber product} to be the
model category $\mathcal M = \mathcal M_1 \times^h_{\mathcal M_3}
\mathcal M_2$ whose objects are given by 5-tuples $(x_1, x_2, x_3;
u, v)$ such that each $x_i$ is an object of $\mathcal M_i$ fitting
into a diagram
\[ \xymatrix@1{F_1(x_1) \ar[r]^-u & x_3 & F_2(x_2) \ar[l]_v.} \]
A morphism of $\mathcal M$, say $f \colon (x_1, x_2, x_3; u, v)
\rightarrow (y_1, y_2, y_3; z, w)$, is given by a triple of maps $f_i: x_i
\rightarrow y_i$ for $i=1,2,3$, such that the following diagram commutes:
\[ \xymatrix{F_1(x_1) \ar[r]^-u \ar[d]^{F_1(f_1)} & x_3 \ar[d]^{f_3} &
F_2(x_2) \ar[l]_-v \ar[d]^{F_2(f_2)} \\
F_1(y_1) \ar[r]^-z & y_3 & F_2(y_2) \ar[l]_-w.} \]

This category $\mathcal M$ can be given the structure of a model
category, where the weak equivalences and cofibrations are given
levelwise.  In other words, $f$ is a weak equivalence (or
cofibration) if each map $f_i$ is a weak equivalence (or
cofibration) in $\mathcal M_i$.

A more restricted definition of this construction requires that
the maps $u$ and $v$ be weak equivalences in $\mathcal M_3$.  Unfortunately, if we impose this additional condition, the resulting category cannot be given the structure of a model category because it is not closed under limits and colimits.  However, intuition suggests that we really want to require $u$ and $v$ to be weak equivalences in order to get an appropriate homotopy pullback.  We would like to have a localization of the model structure on $\mathcal M$ described above such that the fibrant-cofibrant objects have the maps $u$ and $v$ weak equivalences.  In at least some situations, we can find such a localization.

Recall that a model category is \emph{combinatorial} if it is cofibrantly generated and locally presentable as a category \cite[2.1]{duggercomb}.

\begin{theorem}
Let $\mathcal M$ be the homotopy fiber product of a diagram of left Quillen functors
\[ \xymatrix@1{\mathcal M_1 \ar[r]^{F_1} & \mathcal M_3 & \mathcal M_2 \ar[l]_{F_2} } \] where each of the categories $\mathcal M_i$ is combinatorial.  Further assume that $\mathcal M$ is right proper. Then there exists a right Bousfield localization of $\mathcal M$ whose fibrant and cofibrant objects $(x_1, x_2, y_2; u, v)$ have both $u$ and $v$ weak equivalences in $\mathcal M_3$.
\end{theorem}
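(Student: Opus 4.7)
The plan is to realize the desired localization as a right Bousfield localization of $\mathcal M$ with respect to a carefully chosen set $K$ of cofibrant objects, and to invoke the general existence theorem for right Bousfield localizations of combinatorial right proper model categories (due to Barwick, generalizing Hirschhorn's cellular version). That theorem requires right properness, which is assumed, and combinatoriality of $\mathcal M$. The latter follows from combinatoriality of each $\mathcal M_i$: explicit generating (trivial) cofibrations of $\mathcal M$ can be assembled from the generating sets of each $\mathcal M_i$ together with their images under $F_1$ and $F_2$, while local presentability is inherited since $\mathcal M$ is an accessible limit of the locally presentable $\mathcal M_i$.

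Fix a regular cardinal $\kappa$ such that each $\mathcal M_i$ is $\kappa$-combinatorial and $F_1, F_2$ preserve $\kappa$-presentable objects. I would then take $K$ to be a set of representatives for the weak equivalence classes of those cofibrant objects $(a_1, a_2, a_3; u, v)$ of $\mathcal M$ for which each $a_i$ is $\kappa$-presentable in $\mathcal M_i$ and both $u$ and $v$ are weak equivalences in $\mathcal M_3$. The presentability bound ensures that $K$ is a set rather than a proper class.

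Two verifications then remain. First, every cofibrant object $(x_1, x_2, x_3; u, v)$ of $\mathcal M$ with $u, v$ weak equivalences should be $K$-colocal, which I would show by exhibiting it as a filtered homotopy colimit of objects of $K$: write $x_1$ and $x_2$ as $\kappa$-filtered colimits of their $\kappa$-presentable cofibrant subobjects, push these forward via $F_1$ and $F_2$, and take coherent cofibrant replacements $a_3^{(\alpha)}$ of the induced images inside $x_3$ so as to obtain a compatible filtered system in $K$. Second, every $K$-colocal cofibrant object must itself have $u, v$ weak equivalences: the class of \emph{good} objects (those for which both structure maps are weak equivalences) contains the initial object of $\mathcal M$ and is closed under coproducts of cofibrant objects, pushouts of cofibrations between cofibrant objects, and transfinite compositions of cofibrations; hence every $K$-cell complex is good, and therefore so is every $K$-colocal cofibrant object, up to weak equivalence.

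The main obstacle is the first of these verifications, namely producing a coherent filtered presentation of a good object by objects of $K$. The difficulty is choosing the third-coordinate approximations $a_3^{(\alpha)}$ compatibly so that they assemble into a filtered system whose homotopy colimit recovers $x_3$, and so that at each finite stage both structure maps remain weak equivalences. Making this rigorous is the technical heart of the argument; it relies on the combinatoriality hypothesis together with the fact that left Quillen functors are compatible with filtered homotopy colimits of cofibrant objects, which is precisely what lets us transport a $\kappa$-filtered presentation of $x_1$ and $x_2$ across $F_1$ and $F_2$ into a compatible presentation of $x_3$.
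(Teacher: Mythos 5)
Your proposal follows essentially the same route as the paper: both cut the class of objects with $u$ and $v$ weak equivalences down to a set of representatives whose components are small (you use $\kappa$-presentable objects, the paper uses Dugger-style filtered-colimit generators of the combinatorial $\mathcal M_i$), both argue that filtered colimits from this set recover every such object, and both then invoke the Hirschhorn/Barwick existence theorem for right Bousfield localizations of the right proper $\mathcal M$. The ``technical heart'' you flag --- coherently presenting a good object as a filtered (homotopy) colimit of small good objects, and checking that colocal objects are good --- is treated no more explicitly in the paper, which simply asserts that filtered colimits from its chosen set yield all objects with $u$ and $v$ weak equivalences, so your sketch matches the published argument in both structure and level of detail.
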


\begin{proof}
Since the categories $\mathcal M_1$, $\mathcal M_2$, and $\mathcal M_3$ are combinatorial, and hence locally presentable, we can find, for each $i=1,2,3$, a set $\mathcal A_i$ of objects of $\mathcal M_i$ which generates all of $\mathcal M_i$ by filtered colimits \cite[2.2]{duggercomb}.  Furthermore, we can assume that the objects of $\mathcal A_i$ are all cofibrant.  (An explicit such set can be found, for example, using Dugger's notion of a presentation of a combinatorial model category \cite{dugger}.)  Given $a_1 \in \mathcal A_1$ and $a_2 \in \mathcal A_2$, consider the class of all objects $x_3$ such that there are pairs of weak equivalences
\[ \xymatrix@1{F_1(a_1) \ar[r]^\simeq  & x_3 & F_2(a_2). \ar[l]_\simeq} \]  Since $\mathcal A_1$ and $\mathcal A_2$ are sets, we can choose one representative of $x_3$ for each pair $a_1$ and $a_2$ with $F_1(a_1)$ weakly equivalent to $F_2(a_2)$.  Taking the union of this set together with the generating set $\mathcal A_3$ for $\mathcal M_3$, we obtain a set which we denote $\mathcal B_3$.  For $i=1,2$, let $\mathcal B_i=\mathcal A_i$.

In $\mathcal M$, consider the following set of objects:
\[ \{(x_1, x_2, x_3;u,v) \mid x_i \in \mathcal B_i, u, v \text{ weak equivalences in } \mathcal M_3\}. \]  By taking filtered colimits, we can obtain from this set all objects $(x_1, x_2, x_3;u,v)$ of $\mathcal M$ for which the maps $u$ and $v$ are weak equivalences; while arbitrary colimits do not necessarily preserve these weak equivalences, filtered colimits do \cite[7.3]{duggercomb}.
Thus, we can take a right Bousfield localization of $\mathcal M$ with respect to this set of objects; if $\mathcal M$ is right proper, then this localization has a model structure \cite[5.1.1]{hirsch}, \cite{barwick}.
\end{proof}

Unfortunately, it seems to be difficult to describe conditions on the model categories $\mathcal M_1$, $\mathcal M_2$, and $\mathcal M_3$ guaranteeing that $\mathcal M$ is right proper.  We can weaken this condition somewhat, using a remark of Hirschhorn \cite[5.1.2]{hirsch}.  Alternatively, Barwick discusses the structure which is retained after taking a right Bousfield localization of a model category which is not necessarily right proper \cite{barwick}.   Nonetheless, when the conditions of this theorem are not satisfied, we can still use the original levelwise model structure on $\mathcal M$ and simply restrict to the appropriate subcategory when we want to require $u$ and $v$ to be weak equivalences.

In order to determine whether this construction really
gives a homotopy fiber product of homotopy theories, we need to
translate it into the complete Segal space model structure via the functor $L_C$. When we require the
maps $u$ and $v$ to be weak equivalences, we can still take the associated complete Segal space even without a model structure, and we do get a
homotopy pullback in the model category $\css$.  The proof of this
statement is given in the next section.  However, the more general
construction also has a precise description as well, which we give
in the following section.

We conclude this section with a few examples.

\begin{example}
We begin with some comments on the use of homotopy fiber products of model categories as used by To\"en to prove associativity of his derived Hall algebras \cite{toendha}.  In this situation, we have a stable model category; this extra assumption that the homotopy category is triangulated implies that our model category has a ``zero object" so that the initial and terminal objects coincide.  We denote this object 0.

Let $T$ be a dg category, or category enriched over chain complexes over a finite field $k$.  Then a dg module over $T$ is a dg functor $T \rightarrow C(k)$, where $C(k)$ denotes the category of chain complexes of modules over $k$.  There is a model structure $\mathcal M(T)$ on the category of such modules over a fixed $T$, where the weak equivalences and fibrations are given levelwise \cite[\S 3]{toendg}.

Given an object of $\mathcal M(T)^{[1]}$, namely a map $f \colon x \rightarrow y$, let $F \colon \mathcal M(T)^{[1]} \rightarrow \mathcal M(T)$ be the target map, so that $F(f \colon x \rightarrow y)= y$.  Let $C \colon \mathcal M(T)^{[1]} \rightarrow \mathcal M(T)$ be the cone map, so that $C(f\colon x \rightarrow y)= y \amalg_x 0$.  Using these functors, we get a diagram
\[ \xymatrix{ & \mathcal M(T)^{[1]} \ar[d]^C \\
\mathcal M(T)^{[1]} \ar[r]^F & \mathcal M(T).} \]
To understand the homotopy fiber product $\mathcal M$ of this diagram, To\"en uses the model structure on the homotopy fiber product given by levelwise maps; eventually in the proof he adds the additional assumption that the maps $u$ and $v$ in the definition be weak equivalences \cite[\S 4]{toendha}.  The homotopy fiber product $\mathcal M$ given by this diagram is equivalent to the model category $\mathcal M(T)^{[2]}$ whose objects are pairs of composable morphisms in $\mathcal M(T)$.


\end{example}

\begin{example} \label{fiber}
Here we consider the following special case of a homotopy pullback, the homotopy fiber of a map.  Therefore, this definition of homotopy fiber product of model categories leads to the following definition.

\begin{definition}
Let $F \colon \mathcal M \rightarrow \mathcal N$ be a left Quillen functor of model categories.  Then the \emph{homotopy fiber} of $F$ is the homotopy fiber product of the diagram
\[ \xymatrix{ & \mathcal M \ar[d]^F \\
\ast \ar[r] & \mathcal N} \] where the map $\ast \rightarrow N$ is necessarily the map from the trivial model category to the initial object $\phi$ of $\mathcal N$.
\end{definition}

Using our definition, the objects of this homotopy fiber are triples $(\ast, m, n; u, v)$, where $\ast$ denotes the single object of the trivial model category $\ast$, $m$ is an object of $\mathcal M$, $n$ is an object of $\mathcal N$, $u \colon \phi \rightarrow n$ is the unique such map, and $v \colon F(m) \rightarrow n$.  Imposing our condition that $u$ and $v$ be weak equivalences, we get that $n$ must be weakly equivalent to the initial object of $\mathcal N$, and $m$ is any object of $\mathcal M$ whose image under $F$ is weakly equivalent to the initial object of $\mathcal N$.

While this definition follows naturally from the usual notions, it is unsatisfactory for many purposes.  The requirement that the functors in the pullback diagram be left Quillen is a very rigid one. One might perhaps prefer to look at the homotopy fiber over some other object, but here one cannot.
\end{example}

\begin{example}
A further specialization of this definition illustrates its particularly odd nature.  If we take the analogue of a loop space and define the ``loop model category" as the homotopy pullback of the diagram
\[ \xymatrix{ & \ast \ar[d] \\
\ast \ar[r] & \mathcal M} \] for any model category $\mathcal M$, we simply get the subcategory of $\mathcal M$ whose objects are weakly equivalent to the initial object.
\end{example}

\section{Homotopy pullbacks of complete Segal spaces}

Consider the functor $L_C$ which takes a model category (or
simplicial category) to a complete Segal space.  Given a homotopy
fiber square of model categories as defined in the previous
section (namely, where we require the maps $u$ and $v$ to be weak equivalences), we can apply this functor to obtain a homotopy commutative square
\[ \xymatrix{L_C \mathcal M \ar[r] \ar[d] & L_C \mathcal M_2
\ar[d] \\
L_C \mathcal M_1 \ar[r] & L_C \mathcal M_3.} \]

Alternatively, we could apply the functor $L_C$ only to the
original diagram and take the homotopy pullback, which we denote
$P$, and obtain the following diagram:
\[ \xymatrix{P \ar[r] \ar[d] & L_C \mathcal M_2
\ar[d] \\
L_C \mathcal M_1 \ar[r] & L_C \mathcal M_3.} \]

Since $P$ is a homotopy pullback, there exists a natural map $L_C \mathcal M \rightarrow P$.

\begin{theorem} \label{main}
The map
\[ L_C \mathcal M \rightarrow P = L_C \mathcal M_1 \times^h_{L_C \mathcal M_3} L_C \mathcal M_2 \] is a weak equivalence of complete Segal spaces.
\end{theorem}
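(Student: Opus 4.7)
The plan is to check that the given map is a levelwise weak equivalence of simplicial sets; this suffices because both $L_C \mathcal M$ and $P$ are complete Segal spaces (the latter because homotopy pullbacks in $\css$ of complete Segal spaces are again complete Segal spaces), and weak equivalences between complete Segal spaces are detected levelwise. At level $n$, the left-hand side is $\nerve(\we((\mathcal M^c)^{[n]}))$, whose objects are $n$-chains of $5$-tuples $(x_1, x_2, x_3; u, v)$ with $u$ and $v$ weak equivalences in $\mathcal M_3$, while the right-hand side $P_n$ is the homotopy pullback of the three simplicial sets $\nerve(\we((\mathcal M_i^c)^{[n]}))$.

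The main tool is Theorem \ref{baut}, which decomposes each of these simplicial sets as a coproduct of classifying spaces $B\Aut^h(\sigma)$ indexed by weak equivalence classes of $n$-chains. I first verify the bijection on $\pi_0$: a weak equivalence class in $\we((\mathcal M^c)^{[n]})$ is determined by a pair of classes $\langle \sigma_1 \rangle$, $\langle \sigma_2 \rangle$ of $n$-chains in $\mathcal M_i^c$ together with an equivalence class of zig-zag $F_1(\sigma_1) \xrightarrow{\simeq} \sigma_3 \xleftarrow{\simeq} F_2(\sigma_2)$ in $\mathcal M_3$. Components of the homotopy pullback $P_n$ are classified by precisely the same data, since homotopy classes of paths in $L_C(\mathcal M_3)_n$ between images of $\sigma_1$ and $\sigma_2$ correspond exactly to such zig-zag classes (via the mapping spaces $\Map^h$ computed from the Segal space $L_C \mathcal M_3$).

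On each matching component I would then compare the two classifying spaces. On the left sits $B\Aut^h(x_1, x_2, x_3; u, v)$, and on the right sits the homotopy pullback $B\Aut^h(\sigma_1) \times^h_{B\Aut^h(\sigma_3)} B\Aut^h(\sigma_2)$ in which the structure maps are induced by $F_1$ (together with conjugation by $u$) and $F_2$ (together with conjugation by $v$). These agree because $\Aut^h$ of a homotopy fiber product object is the homotopy pullback of the component $\Aut^h$'s as simplicial monoids of self weak equivalences, and applying the classifying space construction $B$ converts homotopy pullbacks of simplicial monoids of homotopy equivalences into homotopy pullbacks of the corresponding classifying spaces.

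The main obstacle is precisely this last step: rigorously identifying $\Aut^h(x_1, x_2, x_3; u, v)$ as the homotopy pullback of $\Aut^h(\sigma_1)$ and $\Aut^h(\sigma_2)$ over $\Aut^h(\sigma_3)$, properly encoding the conjugation data contributed by the weak equivalences $u$ and $v$. Doing so requires introducing the relevant function complexes between the $F_i$-images and $\sigma_3$, identifying the correct path components containing $u$ and $v$, and checking that $B$ behaves well with respect to this pullback. Once these comparisons are assembled and summed over components, they provide the desired levelwise equivalence, and hence the theorem.
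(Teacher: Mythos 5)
Your overall strategy---reduce to a levelwise comparison, which is legitimate since both objects are local in $\css$, and then match the two sides via the $B\Aut^h$ decomposition of Theorem \ref{baut}---is the same as the paper's. The gap is in the step you yourself flag as ``the main obstacle.'' You assert that $B$ converts homotopy pullbacks of simplicial monoids of homotopy equivalences into homotopy pullbacks of classifying spaces, and this is false as stated: taking both outer monoids trivial and the middle one a discrete group $G$, the homotopy pullback $\ast \times^h_{BG} \ast$ is equivalent to $G$, while $B$ applied to the homotopy pullback of trivial monoids over $G$ is a point. What you actually need is the component-by-component statement: the component of $B\Aut^h(\sigma_1) \times^h_{B\Aut^h(\sigma_3)} B\Aut^h(\sigma_2)$ indexed by the zig-zag $(u,v)$ is the classifying space of the \emph{strict} pullback of the $\Aut^h$'s with structure maps twisted by conjugation with $u$ and $v$, and this twisted strict pullback is exactly $\Aut^h((x_1,x_2,x_3;u,v))$. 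That identification, together with the matching of components, is the entire content of the proof, and your proposal leaves it unproved.

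The paper closes this gap without invoking any compatibility of $B$ with homotopy pullbacks of monoids. It first computes the \emph{strict} pullback of the three spaces, using only that $B$ is a right adjoint (hence commutes with strict pullbacks) and commutes with the coproducts over equivalence classes; this identifies the strict pullback with the nerve of the category of diagrams in which the horizontal maps are equalities. It then passes to the homotopy pullback by replacing the maps $L_C\mathcal M_i \rightarrow L_C\mathcal M_3$ by fibrations, using \cite[19.9.4]{hirsch}; the effect of this replacement is precisely to enlarge an object $x_1$ to a pair consisting of $x_1$ together with a map $F_1(x_1) \rightarrow x_3$, so that the resulting pullback is the nerve of the category of \emph{all} diagrams $(x_1,x_2,x_3;u,v)$, which is $(L_C\mathcal M)_0$; level one is handled the same way with larger diagrams, and levels $0$ and $1$ determine the rest. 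If you want to keep your componentwise formulation, you must prove the twisted-pullback identification above---for instance by the same fibrant-replacement device---otherwise the argument does not go through.
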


To prove this theorem, we would like to be able to use Theorem \ref{baut} which characterizes the complete Segal spaces that result from applying the functor $L_C$ to a model category.  However, this theorem only gives the homotopy type of each space in the simplicial diagram, not an explicit description of the precise spaces we obtain.  Thus, we begin by unpacking this characterization in order to obtain actual maps between these ``nice'' versions of the complete Segal spaces $L_C \mathcal M_i$.  More details can be found in \cite[\S 7]{css}, where Theorem \ref{baut} is proved.

We begin with the description of the space at level zero.  Given a model category $\mathcal M$, we can take its corresponding simplicial category $L\mathcal M$ given by Dwyer-Kan simplicial localization.  Denote by $\mathcal C$ the sub-simplicial category of $L\mathcal M$ whose morphisms are all invertible up to homotopy.  Then there is a weak equivalence $F(\mathcal C) \rightarrow \mathcal C$, where $F(\mathcal C)$ denotes the free simplicial category on $\mathcal C$ \cite[\S 2]{dksimploc}.  Then taking a groupoid completion of $F(\mathcal C)$ gives a simplicial groupoid $F(\mathcal C)^{-1} F(\mathcal C)$.  The characterization of the corresponding complete Segal space uses the fact that this simplicial groupoid is equivalent to one which is the disjoint union of simplicial groups, say $\mathcal G$.  There is a functor $F(\mathcal C)^{-1}F(\mathcal C) \rightarrow \mathcal G$ collapsing each component down to one with a single object.  Thus, we obtain a zig-zag of weak equivalences of simplicial categories
\[ \mathcal G \leftarrow F(\mathcal C)^{-1}F(\mathcal C) \leftarrow F(\mathcal C) \rightarrow \mathcal C. \]  Since all these constructions are functorial, by using Theorem \ref{baut}, we are essentially passing from working with $\mathcal C$ to working with $\mathcal G$.

We can apply these same constructions to the morphism category $\mathcal M^{[1]}$ to understand the space at level one, and, more generally, to $\mathcal M^{[n]}$ to obtain the description of the space at level $n$.

\begin{proof}[Proof of Theorem \ref{main}]
Since all the objects in question are complete Segal spaces, i.e., local objects in the model structure $\css$, it suffices to
show that the map $L_C \mathcal M \rightarrow P$ is a levelwise weak equivalence of simplicial sets.
Let us begin by comparing the space at level zero for each.  The
space $P_0$ looks like
\[ (L_C \mathcal M_1)_0 \times^h_{(L_C \mathcal M_3)_0} (L_C
\mathcal M_2)_0 = \coprod_{\langle x_1 \rangle} B\Aut^h(x_1)
\times^h_{\displaystyle{\coprod_{\langle x_3 \rangle} B \Aut^h(x_3)}}
\coprod_{\langle x_2 \rangle} B \Aut^h(x_2). \]  On the other
hand, $(L_C \mathcal M)_0$ looks like
\[ \coprod_{\langle (x_1, x_2, x_3; u,v) \rangle} B\Aut^h ((x_1,
x_2, x_3; u,v)). \]  However, since the classifying space functor $B$ commutes with
taking the disjoint union, this space is equivalent to
\[ B \left( \coprod _{\langle (x_1, x_2, x_3; u,v) \rangle} \Aut^h ((x_1,
x_2, x_3; u,v)) \right). \] Thus, $(L_C \mathcal M)_0$ looks like the
nerve of the category whose objects are diagrams of the form
\[ \xymatrix{F_1(x_1) \ar[r]^u \ar[d]^{F_1(a_1)} & x_3
\ar[d]^{a_3} & F_2(x_2) \ar[l]_v \ar[d]^{F_2(a_2)} \\
F_1(x_1) \ar[r]^u & x_3 & F_2(x_2) \ar[l]_v} \] where each $a_i
\in \Aut^h(x_i)$.  In other words,
\[ \Aut^h((x_1, x_2, x_3; u,v)) \] consists of triples $(a_1, a_2,
a_3)$ such that the above diagram commutes.

For the moment, let us suppose that we have no homotopy invariance problems and that $P_0$ can be given by a pullback, rather than a homotopy pullback; further explanation on this point will be given shortly.

Since $B$ is a right adjoint functor (see \cite[III.1]{gj} for details), it commutes with pullbacks, and we have that
\[ \begin{aligned}
P_0 & \simeq \coprod_{\langle x_1 \rangle} B\Aut^h(x_1)
\times_{\displaystyle{\coprod_{\langle x_3 \rangle} B\Aut^h(x_3)}}
\coprod_{\langle x_2 \rangle} B \Aut^h(x_2) \\
& \simeq B \left(\coprod_{\langle x_1 \rangle} \Aut^h(x_1)
\times_{\displaystyle{\coprod_{\langle x_3 \rangle} \Aut^h(x_3)}}
\coprod_{\langle x_2 \rangle} \Aut^h(x_2) \right) \\
& \simeq B \left(\coprod_{\langle x_1 \rangle, \langle x_2
\rangle, \langle x_3 \rangle} \Aut^h(x_1) \times_{\Aut^h(x_3)}
\Aut^h(x_2) \right).
\end{aligned} \]
Thus, $P_0$ also looks like the nerve of the category whose
objects are diagrams of the form given above, since the leftmost
and rightmost vertical arrows are indexed by maps in $\Aut(x_1)$
and $\Aut(x_2)$, not by their images in $\mathcal M_3$.  So, if
$F_1$, for example, identifies two maps of $\Aut(x_1)$, we
still count two different diagrams.  However, if we are taking a strict pullback, the horizontal maps must be equalities.  We claim that taking the homotopy pullback, rather than the strict pullback,  gives precisely all the diagrams as given above, without this restriction, as follows.

Since our diagram consists of fibrant objects in $\css$, we can apply \cite[19.9.4]{hirsch} and obtain a homotopy pullback by replacing one of the maps in the diagram with a fibration.  In doing so, an object $x_1$, for example, is replaced by a pair given by $x_1$ together with a map $F_1(x_1) \rightarrow x_3$.  Doing the same for the other map (since there is no harm in replacing both of them by fibrations) we obtain all diagrams of the form given above.  So, we have shown that we have the desired weak equivalence on level zero.

Now, it remains to show that we also get a weak equivalence of
spaces at level one.  The argument here is essentially the same
but with larger diagrams.  Again, we take ordinary pullbacks to reduce notation, but this issue can be resolved just as in the level zero case.

The space $P_1 = (L_C \mathcal M_1 \times_{L_C \mathcal M_3} L_C
\mathcal M_2)_1$ can be written as follows:

\SMALL

\begin{multline*} \left( \coprod_{\langle f_1 \colon x_1 \rightarrow y_1 \rangle} B \Aut^h (f_1) \right)
\times_{\displaystyle{\left( \coprod_{\langle f_3 \colon x_3 \rightarrow
y_3 \rangle} B \Aut^h (f_3)
\right)}} \left(
\coprod_{\langle f_2 \colon x_2 \rightarrow x_2 \rangle} B \Aut^h (f_2)\right) \\
\simeq B \left( \coprod_{\langle f_i \colon x_i \rightarrow y_i \rangle}
\left( \Aut^h (f_1) \right) \times_{\displaystyle{\Aut^h (f_3)}} \Aut^h (f_2) \right).
\end{multline*}

\normalsize

Note that when we take $\langle f_i \colon x_i \rightarrow y_i \rangle$, the notation is meant to signify that we are varying $x_i$ and $y_i$ as objects, as well as maps between them, and then taking distinct weak equivalence classes.

On the other hand, if we let
\[ f=(f_1, f_2, f_3) \colon (x_1, x_2, x_3; u,v) \rightarrow (y_1, y_2, y_3; w,z), \]
the space $(L_C \mathcal M)_1$ can be written as
\[ \coprod_{\langle f \rangle} B \Aut^h (f) \simeq
B \left( \coprod_{\langle f \rangle} \Aut^h (f) \right). \]

As above, let $a_i$ denote a homotopy
automorphism of $x_i$, and let $b_i$ denote a homotopy automorphism of $y_i$.  Then, both of the above spaces are given
by the nerve of the category whose objects are diagrams of the form
\[ \xymatrix{ F_1(x_1) \ar[rr]^u \ar[dd]_<<<<<<{F_1(f_1)}
\ar[dr]^{F_1(a_1)} && x_3 \ar[dr]^{a_3} \ar'[d]_{f_3}[dd] &&
F_2(x_2)
\ar[dr]^{F_2(a_2)} \ar'[d]_{F_2(f_2)}[dd] \ar[ll]_v & \\
& F_1(x_1) \ar[rr]^<<<<<<<<u \ar[dd]_<<<<<{F_1(f_1)} && x_3
\ar[dd]^<<<<<<{f_3} &&
F_2(x_2) \ar[ll]_<<<<<<<<<<v \ar[dd]^<<<<<<{F_2(f_2)} \\
F_1(y_1) \ar[dr]_{F_1(b_1)} \ar'[r][rr]^<<<<w && y_3 \ar[dr]^{b_3}
&&
F_2(y_2) \ar'[l]_>>>>>z[ll] \ar[dr]^{F_2(b_2)} & \\
& F_1(y_1) \ar[rr]^w && y_3 && F_2(y_2) \ar[ll]_z} \]

One could show that the higher-degree spaces of each of these
complete Segal spaces are also weakly equivalent, but since these
spaces are determined by these two, the above arguments are
sufficient.
\end{proof}

\section{The more general construction on complete Segal spaces}

In this section, we drop the condition that the maps $u$ and $v$
in the definition of the homotopy fiber product are weak
equivalences in $\mathcal M_3$ and give a characterization of the resulting complete Segal space.

Again, let
\[ \xymatrix{ & \mathcal M_2 \ar[d]^{F_2} \\
\mathcal M_1 \ar[r]^{F_1} & \mathcal M_3} \] be a diagram of model
categories and left Quillen functors.  Let $\mathcal
N$ be the category whose objects are given by 5-tuples $(x_1, x_2, x_3; u,
v)$, where $x_i$ is an object of $\mathcal M_i$ for each $i$, and
the maps $u$ and $v$ fit into a diagram
\[ \xymatrix@1{F_1(x_1) \ar[r]^u & x_3 & F_2(x_2) \ar[l]_v .} \]

The 0-space of the complete Segal space $L_C \mathcal N$ has the homotopy type
\[ \coprod_{\langle (x_1, x_2, x_3; u,v) \rangle} B \Aut^h((x_1,
x_2, x_3; u,v)). \]  An element of the group $\Aut^h((x_1, x_2,
x_3; u,v))$ looks like a diagram
\[ \xymatrix{F_1(x_1) \ar[r]^u \ar[d]_\simeq & x_3 \ar[d]^\simeq &
F_2(x_2) \ar[d]^\simeq \ar[l]_v \\
F_1(x_1) \ar[r]^u & x_3 & F_2(x_2) \ar[l]_v.} \]  Using this
diagram as a guide, we can formulate a concise
description of $(L_C \mathcal N)_0$.

\begin{prop} \label{level0}
Let $N_0$ denote the nerve of the category given by $(\cdot
\rightarrow \cdot \leftarrow \cdot)$.  The space $(L_C \mathcal
N)_0$ has the homotopy type of the pullback of the diagram
\[ \xymatrix{ & \Map(N_0^t, L_C \mathcal M_3) \ar[d] \\
\Map(\Delta [0]^t, L_C \mathcal M_1) \times \Map(\Delta [0]^t, L_C
\mathcal M_2) \ar[r] & \Map(\Delta [0]^t, L_C \mathcal M_3)^2} \]
where the horizontal map is given by
\[ \Map(\Delta[0]^t, L_C F_1) \times \Map(\Delta [0]^t, L_C F_2)
\] and the vertical arrow is induced the pair of source maps
$(s_1, s_2) \colon N_0 \rightarrow \Delta [0] \amalg \Delta [0]$.
\end{prop}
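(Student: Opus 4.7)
The plan is to identify both sides via the characterization of Theorem \ref{baut}, after first reducing the mapping-space terms in the pullback to familiar pieces of the complete Segal spaces $L_C \mathcal M_i$. Since $\Delta[0]^t$ is the terminal simplicial space, the standard identification $\Map(\Delta[n]^t, W) \simeq W_n$ for Reedy fibrant $W$ yields $\Map(\Delta[0]^t, L_C \mathcal M_i) \simeq (L_C \mathcal M_i)_0$. For the top-right vertex I would exhibit $N_0$ as the pushout $\Delta[1] \cup_{\Delta[0]} \Delta[1]$ glued along the common target vertex of its two edges, so that $N_0^t$ is a corresponding pushout in simplicial spaces; applying $\Map(-, L_C \mathcal M_3)$ then turns this into a homotopy pullback
\[ \Map(N_0^t, L_C \mathcal M_3) \simeq (L_C \mathcal M_3)_1 \times^h_{(L_C \mathcal M_3)_0} (L_C \mathcal M_3)_1, \]
with both legs given by the face map $d_0$ extracting the target vertex.

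Next I would unpack what a point of the resulting homotopy pullback records. The bottom-left space parametrizes pairs $(x_1, x_2)$ of objects of $\mathcal M_1, \mathcal M_2$ (together with their self-equivalences), while the top-right space parametrizes cospans $a \to x_3 \leftarrow c$ in $\mathcal M_3$; the commuting square of the pullback asserts $a \simeq F_1(x_1)$ and $c \simeq F_2(x_2)$, so the combined data is precisely a 5-tuple $(x_1, x_2, x_3; u, v)$ of the type occurring in $\mathcal N$. Applying Theorem \ref{baut} to $\mathcal N$ gives
\[ (L_C \mathcal N)_0 \simeq \coprod_{\langle(x_1,x_2,x_3;u,v)\rangle} B\Aut^h((x_1,x_2,x_3;u,v)), \]
and I would compare componentwise: the fiber of the homotopy pullback over a weak equivalence class of 5-tuples is (using that $B$ commutes with both coproducts and homotopy pullbacks of grouplike simplicial monoids) the classifying space of a homotopy fiber product of monoids whose points are triples $(a_1, a_2, a_3)$ with $a_i \in \Aut^h(x_i)$ making the defining square of the 5-tuple commute --- that is, precisely $B\Aut^h((x_1,x_2,x_3;u,v))$.

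The main obstacle will be the careful passage from strict to homotopy pullbacks, exactly as in the proof of Theorem \ref{main}. A strict pullback would record only cospans whose source vertices are literally equal to $F_1(x_1)$ and $F_2(x_2)$, which is overly rigid; replacing the vertical map by a fibration using \cite[19.9.4]{hirsch} (permissible because all terms are fibrant in $\css$) relaxes these equalities to weak equivalences and produces the correct parametrization. A related bookkeeping step is to match the automorphism monoid of a cospan viewed inside $\mathcal M_3^{N_0}$ with the homotopy fiber product of automorphism monoids read off from the Segal-style decomposition of $\Map(N_0^t, L_C \mathcal M_3)$; this follows from naturality of the construction underlying Theorem \ref{baut} as described in \cite[\S 7]{css}.
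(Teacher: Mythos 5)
Your proposal is correct and follows essentially the same route as the paper: both arguments reduce to identifying $\Map(\Delta[0]^t, L_C\mathcal M_i)$ with $(L_C\mathcal M_i)_0$ and $\Map(N_0^t, L_C\mathcal M_3)$ with a fiber product of two copies of $(L_C\mathcal M_3^{[1]})_0 \simeq (L_C\mathcal M_3)_1$ over $(L_C\mathcal M_3)_0$ along target maps, and then matching each component with $B\Aut^h((x_1,x_2,x_3;u,v))$ by writing that automorphism monoid as a fiber product of automorphism monoids and using that $B$ commutes with coproducts and pullbacks. The only difference is direction (you unpack the stated pullback, the paper assembles it from $(L_C\mathcal N)_0$), and your explicit attention to the strict-versus-homotopy pullback bookkeeping is a reasonable supplement rather than a departure.
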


\begin{proof}
For $i=1,2,3$, let $a_i$ denote a homotopy automorphism of $x_i$ in $\mathcal M_i$.  The collection of diagrams of the form
\[ \xymatrix{F_1(x_1) \ar[r]^u \ar[d]^{F_1(a_1)} & x_3 \ar[d]^{a_3} &
F_2(x_2) \ar[d]^{F_2(a_2)} \ar[l]_v \\
F_1(x_1) \ar[r]^u & x_3 & F_2(x_2) \ar[l]_v.} \] can be written as
the pullback
\[ \Aut^h (u) \times_{\displaystyle{\Aut^h(x_3)}} \Aut^h (v). \]

Taking classifying spaces and coproducts over all isomorphism
classes of objects, we obtain the pullback

\setcounter{equation}{1}

\begin{equation} \label{pb}
\coprod_{\langle u \colon F_1(x_1) \rightarrow x_3 \rangle}  B \Aut^h
(u) \times_{\displaystyle{\coprod_{\langle x_3 \rangle} B
\Aut^h(x_3)}} \coprod_{\langle v \colon F_2(x_2) \rightarrow x_3 \rangle} B
\Aut^h (v).
\end{equation}

However, notice that the space
\[ \coprod_{\langle u \colon F_1(x_1) \rightarrow x_3 \rangle} B \Aut^h
(u) \] is equivalent to the pullback
\[ \coprod_{\langle x_1 \rangle} B \Aut^h (x_1)
\times_{\displaystyle{ \coprod_{\langle x_3 \rangle} B
\Aut^h(x_3)}} \coprod_{\langle f_3 \colon y_3 \rightarrow x_3 \rangle} B
\Aut^h (f_3), \]  since an element of $\Aut^h(u)$ looks like a diagram
\[ \xymatrix{ y_3 \ar[r]^{f_3} \ar[d]_\simeq & x_3 \ar[d]^\simeq \\
   y_3 \ar[r]^{f_3} & x_3} \] where $y_3 = F_1(x_1)$.

Analogously, the space
\[ \coprod_{\langle v \colon F_2(x_2) \rightarrow x_3 \rangle} B \Aut^h (v) \]
is equivalent to the pullback
\[ \coprod_{\langle x_2 \rangle} B \Aut^h (x_2)
\times_{\displaystyle{ \coprod_{\langle x_3 \rangle} B
\Aut^h(x_3)}} \coprod_{\langle f_3 \colon y_3 \rightarrow x_3 \rangle} B
\Aut^h (f_3). \]

Putting these two equivalences together, we get that the
pullback (\ref{pb}) can be written as

\Small

\begin{multline*}
\left(\coprod_{\langle x_1 \rangle} B \Aut^h (x_1)
\times_{\displaystyle{\coprod_{\langle x_3 \rangle} B
\Aut^h(x_3)}} \coprod_{\langle f_3 \rangle} B
\Aut^h (f_3) \right) \\ \times_{\displaystyle{\coprod_{\langle x_3
\rangle} B \Aut^h(x_3)}} \\ \left(\coprod_{\langle x_2 \rangle} B \Aut^h (x_2) \times_{\displaystyle{\coprod_{\langle x_3 \rangle} B \Aut^h(x_3)}} \coprod_{\langle f_3 \rangle} B\Aut^h(f_3) \right).
\end{multline*}

\normalsize

However, this pullback can be written in a much more manageable
way using our characterization of the complete Segal spaces
corresponding to a model category.  Thus, we get a pullback
\[ (L_C \mathcal M_1)_0 \times_{(L_C \mathcal M_3)_0} (L_C
\mathcal M_3^{[1]})_0 \times_{(L_C \mathcal M_3)_0} (L_C \mathcal M_2)_0
\times_{(L_C \mathcal M_2)_0} (L_C \mathcal M_3^{[1]})_0. \] Rearranging
terms in the pullback gives an equivalent formulation of this
space as
\[ ((L_C \mathcal M_1)_0 \times (L_C \mathcal M_2)_0) \times_{(L_C
\mathcal M_3)_0^2} ((L_C \mathcal M_3^{[1]})_0 \times_{(L_C \mathcal
M_3)_0} (L_C \mathcal M_3^{[1]})_0). \]  However, this space is
precisely the pullback of the diagram given in the statement of
the proposition, since $\Map(\Delta [0]^t, L_C \mathcal M_1)= (L_C
\mathcal M_1)_0$ and analogously for $\mathcal M_2$, and the
pullback on the right agrees with the space $\Map(N_0^t, L_C
\mathcal M_3)$.
\end{proof}

Now we give a characterization of the space $(L_C \mathcal N)_1$.

\setcounter{theorem}{2}

\begin{prop} \label{level1}
If $N_1$ denotes the nerve of the category given by
\[ \xymatrix{\cdot \ar[r] \ar[d] & \cdot \ar[d] & \cdot \ar[d]
\ar[l] \\
\cdot \ar[r] & \cdot & \cdot \ar[l]} \] then the space $(L_C \mathcal
N)_1$ is weakly equivalent to the homotopy pullback of the diagram
\[ \xymatrix{ & \Map(N_1^t, L_C \mathcal M_3) \ar[d] \\
\Map(\Delta [1]^t, L_C \mathcal M_1) \times \Map(\Delta [1]^t, L_C
\mathcal M_2) \ar[r] & \Map(\Delta [1]^t, L_C \mathcal M_3)^2} \]
where the maps are analogous to the ones in the previous
proposition.
\end{prop}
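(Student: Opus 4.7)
The plan is to closely mirror the proof of Proposition~\ref{level0}, substituting morphisms of $\mathcal N$ for objects and $\Delta[1]^t$ for $\Delta[0]^t$ throughout. By Theorem~\ref{baut}, the space $(L_C \mathcal N)_1$ has the homotopy type of
\[ \coprod_{\langle f \rangle} B\Aut^h(f), \]
where $f = (f_1, f_2, f_3) \colon (x_1, x_2, x_3; u, v) \to (y_1, y_2, y_3; w, z)$ ranges over weak equivalence classes of morphisms in $\mathcal N$. An element of $\Aut^h(f)$ is a pair of triples of homotopy self-equivalences of source and target fitting into the double cube displayed at the end of the proof of Theorem~\ref{main}.

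Just as in Proposition~\ref{level0}, I would first split this automorphism group. Viewing the ``back'' and ``front'' halves of the cube as morphisms $U$ and $V$ in $\mathcal M_3^{[1]}$ (each a commutative square in $\mathcal M_3$) sharing the middle edge $f_3$, the cube decomposes as
\[ \Aut^h(f) \cong \Aut^h(U) \times_{\Aut^h(f_3)} \Aut^h(V), \]
which after applying $B$, taking coproducts over weak equivalence classes, and commuting $B$ with coproducts yields a pullback description of $(L_C \mathcal N)_1$ directly analogous to (\ref{pb}).

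Each of the factors $\coprod_{\langle U \rangle} B\Aut^h(U)$ can then be further rewritten as an iterated pullback over $(L_C \mathcal M_3)_1$ of $(L_C \mathcal M_1)_1$ with a coproduct of classifying spaces indexed by morphisms in $\mathcal M_3^{[1]}$, in exact analogy with the unpacking of $\coprod_{\langle u \rangle} B\Aut^h(u)$ in Proposition~\ref{level0}, just one simplicial level higher. Using Theorem~\ref{baut} to identify the resulting coproducts with $(L_C \mathcal M_i)_1$ for $i=1,2$, with $(L_C \mathcal M_3)_1$, and with $(L_C \mathcal M_3^{[1]})_1$, and then rearranging, one obtains a homotopy pullback of $(L_C \mathcal M_1)_1 \times (L_C \mathcal M_2)_1$ with four copies of $(L_C \mathcal M_3^{[1]})_1$ — one for each of $u, v, w, z$ — glued over $(L_C \mathcal M_3)_1$ in the pattern dictated by $N_1$. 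Since $N_1$, as a category, is built from its $[1]$-arrow subdiagrams amalgamated over its objects, mapping the transposed simplicial space into the Reedy fibrant complete Segal space $L_C \mathcal M_3$ converts this colimit into $\Map(N_1^t, L_C \mathcal M_3)$, and the reassembled pullback is the one in the statement.

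As in Proposition~\ref{level0} and Theorem~\ref{main}, the exchange of strict and homotopy pullbacks is handled by appealing to \cite[19.9.4]{hirsch} together with Reedy fibrancy of the complete Segal spaces involved. The main obstacle is purely bookkeeping: one must check that the four copies of $(L_C \mathcal M_3^{[1]})_1$ glue in precisely the combinatorial pattern of $N_1$, so that the iterated pullback assembles to $\Map(N_1^t, L_C \mathcal M_3)$ rather than some other homotopy limit over the diagram shape. No new conceptual ingredient beyond those already deployed in Proposition~\ref{level0} and Theorem~\ref{main} is required.
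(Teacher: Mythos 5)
Your proposal follows essentially the same route as the paper: identify $(L_C\mathcal N)_1$ with $\coprod_{\langle f\rangle}B\Aut^h(f)$, decompose $\Aut^h(f)$ as an iterated (homotopy) pullback of automorphism spaces of the sub-diagrams of the cube, commute $B$ past coproducts and pullbacks, identify the pieces with the level-one spaces of $L_C\mathcal M_1$, $L_C\mathcal M_2$, $L_C\mathcal M_3$, and $L_C\mathcal M_3^{[1]}$, and reassemble; the paper does the decomposition in a single step, writing $\Aut^h(f)$ as a pullback of $\Aut^h(f_1)$, $\Aut^h(f_2)$, $\Aut^h(\alpha_1)$, and $\Aut^h(\alpha_2)$ over copies of $\Aut^h(f_3)$, where $\alpha_1\colon u\to w$ and $\alpha_2\colon v\to z$ are the two commutative squares, and your two-stage version (the two squares first, then each square) lands in the same place. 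One correction to your bookkeeping: the top of the final pullback is built from \emph{two} copies of $(L_C\mathcal M_3^{[1]})_1$, one for each of the squares $\alpha_1\colon u\to w$ and $\alpha_2\colon v\to z$, glued over $(L_C\mathcal M_3)_1$ along the shared edge $f_3$ --- not four copies indexed by $u,v,w,z$ separately, since a single arrow such as $u$ is an object of $\mathcal M_3^{[1]}$ and hence contributes to $(L_C\mathcal M_3^{[1]})_0$ rather than to $(L_C\mathcal M_3^{[1]})_1$. With that count corrected, $(L_C\mathcal M_3^{[1]})_1\times_{(L_C\mathcal M_3)_1}(L_C\mathcal M_3^{[1]})_1\simeq\Map(N_1^t,L_C\mathcal M_3)$ because $N_1$ is the union of its two constituent $2\times 2$ grids along the middle $\Delta[1]$, and the assembled pullback is exactly the one in the statement, as in the paper.
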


\begin{proof}
Again, let
\[ f = (f_1, f_2, f_3) \colon (x_1, x_2, x_3; u,v) \rightarrow (y_1, y_2, y_3; w, z). \]
Notice that, by definition, the homotopy type of the space
$(L_C \mathcal N)_1$ is given by
\[ \coprod_{\langle f \rangle} B \Aut^h (f). \]  An element of the group $\Aut^h (f)$ is given by a diagram
\[ \xymatrix{ F_1(x_1) \ar[rr]^u \ar[dd]_<<<<<<{F_1(f_1)}
\ar[dr]^{F_1(a_1)} && x_3 \ar[dr]^{a_3} \ar'[d]_{f_3}[dd] &&
F_2(x_2)
\ar[dr]^{F_2(a_2)} \ar'[d]_{F_2(f_2)}[dd] \ar[ll]_v & \\
& F_1(x_1) \ar[rr]^<<<<<<<<u \ar[dd]_<<<<<{F_1(f_1)} && x_3
\ar[dd]^<<<<<<{f_3} &&
F_2(x_2) \ar[ll]_<<<<<<<<<<v \ar[dd]^<<<<<<{F_2(f_2)} \\
F_1(y_1) \ar[dr]_{F_1(b_1)} \ar'[r][rr]^<<<<w && y_3 \ar[dr]^{b_3}
&&
F_2(y_2) \ar'[l]_>>>>>z[ll] \ar[dr]^{F_2(b_2)} & \\
& F_1(y_1) \ar[rr]^w && y_3 && F_2(y_2) \ar[ll]_z.} \]

If we let $\alpha_1 \colon u \rightarrow w$ and $\alpha_2 \colon v \rightarrow z$ be maps in $\mathcal M^{[1]}$, such a diagram can also be regarded as an element of the homotopy
pullback

\small

\[ \Aut^h(f_1) \times_{\Aut^h(f_3)} \Aut^h(\alpha_1)
\times_{\Aut^h(f_3)} \Aut^h(\alpha_2) \times_{\Aut^h(f_2)}
\Aut^h (f_3). \]

\normalsize

Taking classifying spaces and coproducts over all possible classes
of objects and morphisms, we obtain a pullback

\Small

\begin{multline*}
\left(\coprod_{\langle f_1 \rangle} B\Aut^h(f_1) \times_{\displaystyle{\coprod_{\langle f_3 \rangle}
B\Aut^h(f_3)}} \coprod_{\langle \alpha_1 \rangle} B\Aut^h(\alpha_1) \right) \\ \times_{\displaystyle{\coprod_{\langle f_3 \rangle}
B\Aut^h(f_3)}} \\ \left(\coprod_{\langle f_2 \rangle} B\Aut^h (f_2) \times_{\displaystyle{\coprod_{\langle f_3 \rangle}
B\Aut^h(f_3)}} \coprod_{\langle \alpha_2 \rangle} B\Aut^h(\alpha_2) \right).
\end{multline*}

\normalsize

This pullback can be rewritten in terms of the corresponding
complete Segal spaces as
\[ \left((L_C \mathcal M_3)_1 \times_{(L_C \mathcal M_3)_1} (L_C \mathcal
M_3^{[1]})_1 \right) \times_{(L_C \mathcal M_3)_1} \left( (L_C
\mathcal M_2)_1 \times_{(L_C \mathcal M_3)_1} (L_C \mathcal
M_3^{[1]})_1 \right). \]

At this point, notice that this space is also the pullback of
the diagram
\[ \xymatrix{ & (L_C \mathcal M_3^{[1]})_1 \times_{(L_C \mathcal
M_3)_1} (L_C \mathcal M_3^{[1]})_1 \ar[d] \\
(L_C \mathcal M_1)_1 \times_{(L_C \mathcal M_3)_1} (L_C \mathcal
M_2)_1 \ar[r] & (L_C \mathcal M_3)_1^2.} \]  However, since the
upper space is equivalent to $\Map(N_1^t, L_C \mathcal M_3)$, we have
completed the proof.
\end{proof}

Lastly, notice that the simplicial set $N_1$ from Proposition \ref{level1} is just $\Map(\Delta [1], N_0) = N_0^{\Delta[1]}$, where $N_0$ is as in Proposition \ref{level0}.  We can use these results and the properties of complete Segal spaces to give the following theorem.

\begin{theorem}
Let $N$ be the simplicial space given by $N_n = N_0^{\Delta [n]}$.  Then the complete Segal space $L_C \mathcal N$ is weakly equivalent to the homotopy pullback of the diagram
\[ \xymatrix{ & \Map(N, L_C \mathcal M_3) \ar[d] \\
L_C \mathcal M_1 \times L_C \mathcal M_2 \ar[r] & L_C\mathcal M_3 \times L_C \mathcal M_3.} \]
\end{theorem}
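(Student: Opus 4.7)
The plan is to reduce the claim to a levelwise comparison, using that both sides of the asserted equivalence are complete Segal spaces. The left-hand side $L_C \mathcal N$ is a complete Segal space by construction. The right-hand side, as a homotopy pullback computed in $\css$ of complete Segal spaces, is also a complete Segal space, since complete Segal spaces are closed under homotopy limits and $\Map(N,-)$ sends complete Segal spaces to complete Segal spaces. Because weak equivalences between complete Segal spaces are detected levelwise, it suffices to exhibit, for each $n \geq 0$, a compatible weak equivalence of simplicial sets on the $n$-th levels.

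For $n = 0$ and $n=1$ this is precisely the content of Propositions \ref{level0} and \ref{level1}. I would extend the same strategy to general $n$ as follows. An object of $\mathcal N^{[n]}$ is an $n$-string of composable morphisms in $\mathcal N$, which unpacks into a commutative diagram of shape $C_0^{[n]}$, where $C_0$ is the indexing category $(\cdot \rightarrow \cdot \leftarrow \cdot)$ whose nerve is $N_0$. Exactly as in the proof of Proposition \ref{level1}, the space $\Aut^h(f)$ for such an $n$-string $f$ decomposes as an iterated pullback of the homotopy automorphism groups of the constituent $n$-strings in $\mathcal M_1$, $\mathcal M_2$, and $\mathcal M_3$, glued along the zig-zag structure. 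Taking classifying spaces, coproducts over weak equivalence classes, and applying Theorem \ref{baut}, this rewrites $(L_C \mathcal N)_n$ as
\[ ((L_C \mathcal M_1)_n \times (L_C \mathcal M_2)_n) \times^h_{(L_C \mathcal M_3)_n \times (L_C \mathcal M_3)_n} \Map(N_n^t, L_C \mathcal M_3). \]
Since $N_n = N_0^{\Delta[n]}$ by definition and $(L_C \mathcal M_i)_n = \Map(\Delta[n]^t, L_C \mathcal M_i)$, the standard adjunction identifies this with the $n$-th level of the homotopy pullback appearing in the statement, and these identifications are compatible with the face and degeneracy maps arising from the cosimplicial structure of $\{\Delta[n]\}$.

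The main obstacle is the combinatorial bookkeeping required to write out the decomposition of $\Aut^h(f)$ for an $n$-string explicitly: diagrams representing such elements have $n+1$ horizontal layers of the basic zig-zag joined by vertical arrows from the $n$-string structure, and the argument from Proposition \ref{level1}, which handles the case of two such layers, grows visually cumbersome as $n$ increases. A cleaner alternative would be to invoke the Segal condition directly: both sides are Segal spaces, so once the equivalence is established at levels $0$ and $1$ in a way compatible with the Segal face maps giving $W_n \simeq W_1 \times^h_{W_0} \cdots \times^h_{W_0} W_1$, it propagates automatically to all higher levels. I would pursue this route in practice, since it reduces the explicit analysis to the two cases already handled in Propositions \ref{level0} and \ref{level1}.
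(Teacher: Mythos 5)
Your proposal matches the paper's own (very brief) argument: the paper derives this theorem from Propositions \ref{level0} and \ref{level1} together with the observation, made explicitly in the proof of Theorem \ref{main}, that a map of complete Segal spaces is a weak equivalence once it is one on levels $0$ and $1$, since the higher spaces are determined by these via the Segal condition. Your sketch of the direct level-$n$ decomposition is extra detail the paper omits, but the route you say you would actually pursue is exactly the one the paper takes.
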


\end{document}